\documentclass[a4paper,12pt, reqno]{amsart}
\usepackage{amssymb,amsfonts}
\usepackage[all,arc]{xy}
\usepackage{enumerate}
\usepackage{mathrsfs}
\usepackage[utf8]{inputenc}
\usepackage{amsmath,mathtools,tikz,graphicx,caption,subcaption}
\usepackage{xcolor}
\usepackage{amssymb}
\newtheorem{thm}{Theorem}[section]
\newtheorem{prop}[thm]{Proposition}

\theoremstyle{definition}
\newcommand{\Q}{\mathbb{Q}}

\newtheorem{theorem}{Theorem}[section]
\newtheorem{lemma}{Lemma}[section]
\newtheorem{example}{Example}[section]

\newtheorem{corollary}{Corollary}[section]
\newtheorem{definition}{Definition}[section]

\theoremstyle{remark}

\newtheorem*{acknowledgements*}{Acknowledgements}

\makeatletter
\def\blfootnote{\gdef\@thefnmark{}\@footnotetext}
\makeatother

\def\house#1{\setbox1=\hbox{$\,#1\,$}%
\dimen1=\ht1 \advance\dimen1 by 2pt \dimen2=\dp1 \advance\dimen2 by 2pt
\setbox1=\hbox{\vrule height\dimen1 depth\dimen2\box1\vrule}%
\setbox1=\vbox{\hrule\box1}%
\advance\dimen1 by .4pt \ht1=\dimen1
\advance\dimen2 by .4pt \dp1=\dimen2 \box1\relax}

\begin{document}

\title[A new characterization for the Lucas-Carmichael integers \\ and sums of base-$p$ digits]{A new characterization for the Lucas-Carmichael integers and sums of base-$p$ digits}
\author[Sridhar Tamilvanan and  Subramani Muthukrishnan] {Sridhar Tamilvanan $^{(1)}$ and Subramani Muthukrishnan $^{(2)}$}
\address{$^{(1)}$  Indian Institute of Information Technology, Design and Manufacturing, Kancheepuram, Chennai - 600127, INDIA}
\address{$^{(2)}$ Indian Institute of Information Technology, Design and Manufacturing, Kancheepuram, Chennai - 600127, INDIA}
\email{$^{(1)}$mat20d001@iiitdm.ac.in}
\email{$^{(2)}$subramani@iiitdm.ac.in}

\begin{abstract}
In this paper, we prove a  necessary and sufficient condition for the Lucas-Carmichael integers in terms of the sum of base-$p$ digits. We also study some interesting properties of such integers. Finally, we prove that there are infinitely many Lucas-Carmichael integers assuming the prime $k$-tuples conjecture.
\end{abstract}
\subjclass[2020]{11A51, 11N25}
\keywords{Fermat's little theorem, Carmichael number, Lucas-Carmichael integers,  sum of base-$p$ digits, Prime $k$-tuples Conjecture.}
\maketitle

\section{Introduction}

The classical Fermat's little theorem states that if $p$ is a prime number, then any positive integer $a$ satisfies $a^{p} \equiv a \pmod{p}.$ In particular, if $a$ is not divisible by $p$, we have $a^{p-1} \equiv 1 \pmod{p}.$ However, the converse of Fermat’s little theorem is not true. For an example, $561$ is the least composite integer that satisfies $a^{560} \equiv 1 \pmod{561}$ for every positive integer $a$ with $gcd(a, 561) = 1.$ Such integers are called Carmichael integers. In general, we have the following definition:

\begin{definition}
A composite positive integer $m$ is called a Carmichael number if the congruence $a^{m-1} \equiv 1 \pmod{m}$ holds for all integers $a$ co-prime to $m.$
\end{definition}

\noindent In 1899, A. Korselt \cite{Korselt 1} observed an important criterion for Carmichael numbers.

\begin{theorem}(Korselt's criterion). A composite number $m$ is a Carmichael number if and only if $m$ is square-free and every prime divisor $p$ of $m$ satisfies $p - 1$ $|$ $m - 1.$
\end{theorem}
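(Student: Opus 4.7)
The plan is to prove the two implications of Korselt's criterion separately, using only Fermat's little theorem, the structure of the unit groups $(\Z/p^k\Z)^*$, and the Chinese remainder theorem (CRT).

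For the sufficiency direction, I would write $m = p_1 \cdots p_r$ with distinct primes $p_i$, fix $a$ coprime to $m$, and apply Fermat's little theorem at each $p_i$: since $p_i - 1 \mid m - 1$, raising $a^{p_i - 1} \equiv 1 \pmod{p_i}$ to the exponent $(m-1)/(p_i - 1)$ gives $a^{m-1} \equiv 1 \pmod{p_i}$. Piecing these congruences together via CRT yields $a^{m-1} \equiv 1 \pmod m$, so $m$ is a Carmichael number.

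For the necessity direction, I would split the argument into two claims. First, to show that $m$ is square-free, I would argue by contradiction: assume $p^k \mid m$ with $k \geq 2$, and write $m = p^k n$ with $\gcd(p,n)=1$. The unit group $(\Z/p^k\Z)^*$ has order $p^{k-1}(p-1)$; for odd $p$ it is cyclic, so pick a generator $g$, while for $p = 2$ one instead picks any element $g$ of even order (for example $-1$, which has order $2$ in $(\Z/2^k\Z)^*$ whenever $k \geq 2$). By CRT lift $g$ to an integer $a$ satisfying $a \equiv g \pmod{p^k}$ and $a \equiv 1 \pmod{n}$, so $\gcd(a,m) = 1$. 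The Carmichael hypothesis forces $a^{m-1} \equiv 1 \pmod{p^k}$, hence the order of $g$ (which in both cases is divisible by $p$) divides $m-1$. But $p \mid m$ forces $\gcd(p, m-1) = 1$, a contradiction. Second, for each prime divisor $p \mid m$, pick a primitive root $g$ modulo $p$ and lift it by CRT to an $a$ with $a \equiv g \pmod p$ and $a \equiv 1 \pmod{m/p}$, so $\gcd(a,m)=1$; the Carmichael condition gives $a^{m-1} \equiv 1 \pmod p$, forcing $p - 1 \mid m - 1$ since $g$ has multiplicative order $p - 1$ modulo $p$.

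The one real technical point is the square-freeness step, where one must exhibit a unit modulo $m$ whose multiplicative order at some $p^k \mid m$ is divisible by $p$. The cleanest route is to invoke the cyclicity of $(\Z/p^k\Z)^*$ for odd $p$, together with a separate remark for $p=2$, and then use CRT to preserve coprimality with the remaining cofactor of $m$. Once such an element is in hand, the arithmetic incompatibility between $p \mid m$ and $p \mid m-1$ closes the proof immediately; the remaining divisibility claim $p - 1 \mid m - 1$ is then a direct consequence of choosing $a$ to be a primitive root mod $p$.
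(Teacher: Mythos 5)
The paper does not prove this statement at all: Korselt's criterion is quoted as a classical result with a citation to Korselt (1899), so there is no in-paper argument to compare against. Judged on its own, your proof is correct and is the standard one. The sufficiency direction (Fermat's little theorem at each prime factor plus CRT) is exactly right, and in the necessity direction you correctly identify the one delicate point, namely producing a unit of order divisible by $p$ modulo $p^k$ when $k \geq 2$: cyclicity of $(\Z/p^k\Z)^*$ handles odd $p$, and your separate use of $-1$ (order $2$) handles $p=2$, after which $p \mid m-1$ together with $p \mid m$ gives the contradiction. The final step, lifting a primitive root mod $p$ via CRT (legitimate since square-freeness is already established, so $p$ and $m/p$ are coprime) to conclude $p-1 \mid m-1$, is also correct. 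This is a complete and standard proof of a result the paper merely cites.
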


\noindent
Later, R. D. Carmichael \cite{Carmichael 1,Carmichael 2} proved a few interesting properties for Carmichael numbers.

\begin{theorem}(R. D. Carmichael). Every Carmichael number $m$ is odd, square-free and has at least three prime factors. If $p$ and $q$ are prime divisors of $m,$ then
$$(i)~ p - 1~ |~ m - 1,~~~~~~ (ii) ~p - 1~ |~ \frac{m}{p} - 1~~~  \textrm{ and }~~~ (iii)~ p ~\nmid~ q - 1.$$
\end{theorem}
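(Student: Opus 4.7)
The plan is to establish each conclusion in turn, relying chiefly on Korselt's criterion (square-freeness and $p-1 \mid m-1$ for every prime divisor $p$ of $m$), which already disposes of two of the claimed properties, namely square-freeness and assertion (i). The remaining work is the oddness of $m$, the lower bound of three prime factors, and the divisibility relations (ii) and (iii).

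To prove $m$ is odd, I would test the Carmichael congruence at $a = m - 1$, which is coprime to $m$ and satisfies $a \equiv -1 \pmod{m}$. If $m$ were even, then $m - 1$ would be odd, so $(m-1)^{m-1} \equiv (-1)^{m-1} \equiv -1 \pmod{m}$, contradicting $(m-1)^{m-1} \equiv 1 \pmod{m}$; hence $m$ must be odd. For property (ii), I would write the identity $m - 1 = p\bigl((m/p) - 1\bigr) + (p - 1)$ valid for any prime divisor $p \mid m$. Korselt supplies $p - 1 \mid m - 1$, and $p-1$ trivially divides $p-1$, so the identity forces $p - 1 \mid p\cdot \bigl((m/p) - 1\bigr)$; since $\gcd(p - 1, p) = 1$, cancellation yields $p - 1 \mid (m/p) - 1$.

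For property (iii) I would argue by contradiction: if $p \mid q - 1$ for distinct prime divisors $p, q$ of $m$, then Korselt's divisibility $q - 1 \mid m - 1$ transitively gives $p \mid m - 1$; combined with $p \mid m$ this yields $p \mid 1$, a contradiction. Finally, to rule out exactly two prime factors, suppose $m = pq$ with primes $p < q$. Applying (i) to $q$ gives $q - 1 \mid pq - 1 = p(q - 1) + (p - 1)$, hence $q - 1 \mid p - 1$, which is impossible because $0 < p - 1 < q - 1$. Together with square-freeness (which already rules out prime powers), this forces at least three distinct prime factors.

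The main obstacle I anticipate is essentially organizational: ordering the deductions so that Korselt's criterion is invoked before properties (i)--(iii) and so that oddness and the three-factor bound are deduced independently of one another. The subtlest individual step is probably the oddness argument, which requires choosing the particular witness $a = m - 1$ rather than attempting to argue directly from the definition; everything else reduces to short modular arithmetic manipulations once Korselt's criterion is available.
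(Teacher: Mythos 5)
Your argument is correct in every step. Note, however, that the paper does not prove this theorem at all: it is stated as a classical result of R.~D.~Carmichael with a citation to his 1910 and 1912 papers, so there is no in-paper proof to compare against. Your derivation is the standard one: square-freeness and (i) come straight from Korselt's criterion; the witness $a = m-1 \equiv -1 \pmod{m}$ forces $-1 \equiv 1 \pmod{m}$ if $m$ were even, which is impossible since a composite $m$ satisfies $m \ge 4$; the identity $m-1 = p\bigl(\tfrac{m}{p}-1\bigr) + (p-1)$ together with $\gcd(p-1,p)=1$ gives (ii); transitivity of divisibility gives (iii); and the case $m = pq$ is excluded because $q-1 \mid p(q-1)+(p-1)$ would force $q-1 \mid p-1$ with $0 < p-1 < q-1$. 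One could streamline slightly by observing that the two-factor exclusion is just the case $\tfrac{m}{q} = p$ of your property (ii) (namely $q-1 \mid p-1$), but as written everything checks out.
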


\noindent
In 1994, W. R. Alford, Andrew Granville, and Carl Pomerance \cite{Alford 1} solved the long-standing conjecture that the set of all Carmichael numbers is infinite. More precisely, they showed that if $C(x)$ denotes the number of Carmichael numbers less than $x,$ then $C(x) > x^{2/7}$ for sufficiently large $x.$
\medskip

\noindent
Recently, B. C. Kellner and J. Sondow \cite{Kellner and Sondow} derived a new characterization for Carmichael numbers as follows: For a positive integer $m,$ denote $S_p(m)$ as the sum of the base-$p$ digits of $m$. Then, $m$ is a Carmichael number if and only if it is square-free and each of its prime factors $p$ satisfies $S_p(m) \geq p$ and $S_p(m) \equiv 1 \pmod{p-1}.$ In particular, a primary Carmichael number $m$ is a Carmichael number that satisfies $S_p(m) = p$ for every prime factor $p$ of $m.$ B. C. Kellner and J. Sondow \cite{Kellner and Sondow} counted the Carmichael numbers and primary Carmichael numbers up to $10^{10}.$ In 2022, Wagstaff \cite{Samuel} proved that the prime $k$-tuples conjecture implies that there are infinitely many primary Carmichael numbers.
\medskip

\noindent
In this paper, we study a variation of Carmichael numbers motivated by Gordon's primality testing algorithm \cite{Gordon} as explained below. An elliptic curve $E$ over $\mathbb{Q}$ is a smooth projective curve that satisfies the Weierstrass equation $$E : Y^2 = X^3 + aX+b,$$ where $a, b \in \mathbb{Q}$ and discriminant $\Delta = 4a^3 + 27b^2 \neq 0.$  For an elliptic curve $E$ with complex multiplication by $\Q(\sqrt{-d}),$ let $P \in E(\Q)$ be a rational point of infinite order and $m$ be a positive integer such that $gcd(m, 6\Delta) = 1$ with $\Big(\frac{-d}{m}\Big) = -1,$ where $\Big(\frac{-d}{m}\Big)$ is the Jacobi symbol. If $m$ is a prime, then
$$[m + 1]P \equiv \mathcal{O} \pmod{m}.$$
If $m$ satisfies the above congruence, then $m$ is a probable prime by Gordon’s primality test. Also, $m$ is a composite number when $m$ does not satisfy the above congruence relation.

\begin{definition}
Let $m$ be a composite number and $E$ be a $CM$-elliptic curve. If $m$ satisfies the Gordon primality test, then $m$ is called an $E$-elliptic Carmichael number. A composite integer $m$ is said to be an elliptic Carmichael number if $m$ is an $E$-elliptic Carmichael number for every $CM$-elliptic curve $E.$
\end{definition}

\noindent
Ekstrom et al. \cite{EPT 1} computed the following smallest elliptic Carmichael number:
\begin{align*}
 617730918224831720922772642603971311 = p(2p + 1)(3p + 2),
\end{align*}
where $p = 468 686 771 783.$ Also, they proved the following Elliptic Carmichael condition.

\begin{theorem}(Elliptic Carmichael Condition).
Let $m$ be a square-free, composite positive integer with an odd number of prime factors. Moreover, let $\alpha  = 8 \cdot 3 \cdot 7 \cdot 11 \cdot 19 \cdot 43 \cdot 67 \cdot 163.$ Then $m$ is an elliptic Carmichael number if for each prime $p$ $|$ $m,$ we have $\alpha$ $|$ $p + 1$ and $p + 1$ $|$ $m + 1.$
\end{theorem}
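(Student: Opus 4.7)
The plan is to verify, for any $CM$-elliptic curve $E/\Q$ with complex multiplication by $K = \Q(\sqrt{-d})$ and any rational point $P \in E(\Q)$ of infinite order satisfying the Gordon hypotheses $\gcd(m,6\Delta) = 1$ and $\left(\tfrac{-d}{m}\right) = -1$, that $[m+1]P \equiv \mathcal{O} \pmod m$. Because $m$ is square-free, by the Chinese remainder theorem this reduces to proving $[m+1]P \equiv \mathcal{O} \pmod p$ for every prime $p \mid m$, and the strategy is to show $|E(\mathbb{F}_p)| = p+1$ and then invoke the divisibility $p+1 \mid m+1$.

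The constant $\alpha$ has been chosen to encode precisely the imaginary quadratic fields that can serve as the CM field of an elliptic curve defined over $\Q$: its odd prime factors $3, 7, 11, 19, 43, 67, 163$ are the Heegner numbers with $d \equiv 3 \pmod 4$, and the factor $8$ takes care of the two remaining fields $\Q(i)$ and $\Q(\sqrt{-2})$. The key arithmetic step I would carry out first is to show that $\alpha \mid p+1$ forces $\left(\tfrac{-d}{p}\right) = -1$ for every Heegner $d \in \{1,2,3,7,11,19,43,67,163\}$. For each odd such $d$ (all of which are $\equiv 3 \pmod 4$), the Kronecker symbol $\chi_{-d}$ is a primitive odd character modulo $d$, so $\chi_{-d}(-1) = -1$; since $\alpha \mid p+1$ implies $p \equiv -1 \pmod d$, this forces $\chi_{-d}(p) = -1$. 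For $d = 1$ and $d = 2$, the congruence $p \equiv 7 \pmod 8$ gives $\left(\tfrac{-1}{p}\right) = \left(\tfrac{-2}{p}\right) = -1$ by direct computation. As a by-product, since $m = p_1 \cdots p_k$ with $k$ odd, I obtain the Jacobi symbol $\left(\tfrac{-d}{m}\right) = (-1)^k = -1$, so the Gordon-test hypothesis is automatically consistent with the conditions of the theorem.

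With $p$ inert in $K$ in hand, I would invoke the standard fact from CM theory that $E$ then has supersingular reduction at $p$ with trace of Frobenius $a_p = 0$, so $|E(\mathbb{F}_p)| = p+1$. The divisibility $p+1 \mid m+1$ then yields $[m+1]\overline{P} = \mathcal{O}$ in $E(\mathbb{F}_p)$, i.e.\ $[m+1]P \equiv \mathcal{O} \pmod p$; running over all $p \mid m$ and reassembling by the Chinese remainder theorem finishes the argument for a fixed $E$. Because the CM field enters only through the single check $\chi_{-d}(p) = -1$, which holds uniformly for all nine Heegner values of $d$, $m$ is simultaneously an $E$-elliptic Carmichael number for every such $E$. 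The main obstacle I anticipate is the middle step: the finite but finicky case-by-case verification that this particular $\alpha$ is sufficient to force inertness in all nine CM fields at once, where the parity conventions of quadratic reciprocity and the Kronecker symbol must be tracked carefully. The remainder is a clean application of CM theory and the Chinese remainder theorem.
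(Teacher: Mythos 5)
The paper does not actually prove this theorem: it is quoted verbatim from Ekstrom, Pomerance and Thakur \cite{EPT 1}, so there is no internal proof to compare against. Your outline is essentially the argument of that source and is correct in its main lines: the odd prime factors of $\alpha$ are exactly the Heegner values $d\equiv 3\pmod 4$, the factor $8$ handles $d=1,2$, the congruence $p\equiv -1\pmod{\alpha}$ forces $\left(\frac{-d}{p}\right)=-1$ for all nine class-number-one fields (your quadratic-reciprocity computations check out), Deuring's criterion then gives supersingular reduction with $|E(\mathbb{F}_p)|=p+1$, and $p+1\mid m+1$ kills $\overline{P}$; the odd number of prime factors is precisely what makes $\left(\frac{-d}{m}\right)=(-1)^{\omega(m)}=-1$, so $m$ genuinely falls within the scope of Gordon's test. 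The one point you pass over too quickly is the Chinese-remainder step: for composite $m$ the congruence $[m+1]P\equiv \mathcal{O}\pmod{m}$ in Gordon's test is defined via the addition formulas over $\mathbb{Z}/m\mathbb{Z}$ (equivalently, partial additions with gcd checks), not simply as a statement about $E(\mathbb{F}_p)$ for each $p\mid m$ separately, and reconciling the two is where \cite{EPT 1} spends some genuine effort; as written, ``reassembling by CRT'' asserts the conclusion of that reconciliation rather than proving it. This is a standard but non-vacuous technicality, and your sketch should flag it rather than treat it as automatic.
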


\noindent
Observing the above elliptic Carmichael condition, the following Korselt-like criterion has been noted: $p + 1$ $|$ $m + 1$ whenever $p$ $|$ $m$, and thus the Lucas-Carmichael integers have been defined.

\begin{definition}
A Lucas-Carmichael integer is a square-free positive composite integer $m$ such that $p+1$ $|$ $m+1$ whenever $p$ $|$ $m$.
\end{definition}

\noindent
In 2018, Thomas Wright \cite{Thomas Wright} proved that there are infinitely many Lucas-Carmichael integers. In fact, he showed that if $\mathcal{N}(X)$ denotes the number of elliptic Carmichael numbers up to $X,$ then there exists a constant $K>0$ such that
$$
\mathcal{N}(X)\gg (X)^{\frac{K}{(\log \log \log X)^2}}.
$$
\noindent In this paper, we derive a new characterization for the Lucas-Carmichael integers and prove that there are infinitely many Lucas-Carmichael integers assuming the prime $k$-tuples conjecture.

\section{Preliminaries}

\noindent
We start with interesting and elementary results.

\begin{lemma}\label{lemma0.1}
Let $m, n$ be two positive integers with $n > m$. Then $S_{m+1}(n+1) \equiv n+1  \pmod{m}.$
\end{lemma}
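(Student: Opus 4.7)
The plan is to recognize this as an instance of the classical digit-sum congruence: in any base $b \geq 2$, every positive integer is congruent to the sum of its base-$b$ digits modulo $b-1$. Applying this principle with $b = m+1$ to the integer $n+1$ immediately gives the desired congruence $S_{m+1}(n+1) \equiv n+1 \pmod{m}$.

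More concretely, I would first write $n+1$ in its base-$(m+1)$ expansion, say
\[
n+1 \;=\; \sum_{i=0}^{k} d_i (m+1)^i,
\]
where $0 \le d_i \le m$ are the base-$(m+1)$ digits of $n+1$, so that by definition $S_{m+1}(n+1) = \sum_{i=0}^{k} d_i$. The key observation is that $m+1 \equiv 1 \pmod{m}$, hence $(m+1)^i \equiv 1 \pmod{m}$ for every $i \geq 0$. Reducing the expansion above modulo $m$ term by term therefore yields
\[
n+1 \;\equiv\; \sum_{i=0}^{k} d_i \;=\; S_{m+1}(n+1) \pmod{m},
\]
which is exactly the statement of the lemma.

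There is no real obstacle here; the hypothesis $n > m$ is only used implicitly to ensure that the base-$(m+1)$ representation of $n+1$ is non-trivial (i.e.\ has more than one digit), but the argument works verbatim in any case. The lemma is really just the base-$(m+1)$ analogue of the familiar ``casting out nines'' rule, and its role in the paper will presumably be to convert divisibility conditions of the form $p+1 \mid n+1$ (which appear in the definition of Lucas--Carmichael integers) into conditions on digit sums $S_{p+1}(n+1)$, paralleling the Kellner--Sondow characterization of Carmichael numbers.
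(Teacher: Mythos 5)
Your proof is correct and follows essentially the same route as the paper: expand $n+1$ in base $m+1$, use $(m+1)^i \equiv 1 \pmod{m}$, and sum the digits. Your additional remark that the hypothesis $n > m$ is not actually needed for the argument is also accurate.
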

\begin{proof}
We write the integer $n+1$ with respect to the base $m+1$ as follows:
\begin{equation}\label{eqn1}
n+1 = n_0 + n_1 (m+1) + n_2 (m+1)^2 + n_3 (m+1)^3 + \cdots,
\end{equation}
where $0 \leq n_i < m+1$ for all $i$.

\medskip
\noindent
Since $(m+1)^k\equiv 1 \pmod{m}$ for all positive integer $k$ and from the equation $(\ref{eqn1}),$ it follows that $S_{m+1}(n+1) \equiv n+1  \pmod{m}.$
\end{proof}

\begin{corollary}\label{cor2.1}
Let $n\geq 1$ be an integer. Then
$$S_{d+1}(n+1)\equiv 1 \pmod{d}$$
for all divisors $d$ of $n.$
\end{corollary}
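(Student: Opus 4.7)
The plan is to deduce the corollary almost immediately from Lemma \ref{lemma0.1}. Let $d$ be any divisor of $n$, and write $n = dk$ for some positive integer $k$.

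First I would dispose of the degenerate case $n = d$ (i.e.\ $k = 1$): here $n+1$ written in base $n+1$ is simply the two-digit string $(1,0)$, so $S_{n+1}(n+1) = 1$, which trivially satisfies $S_{n+1}(n+1)\equiv 1 \pmod{d}$. This case is needed because Lemma \ref{lemma0.1} requires the strict inequality $n > m$, and the substitution $m = d = n$ would violate that hypothesis.

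In the remaining case $k \geq 2$ we have $n > d$, so Lemma \ref{lemma0.1} applies with $m = d$ and yields
\begin{equation*}
S_{d+1}(n+1) \equiv n+1 \pmod{d}.
\end{equation*}
Since $d \mid n$ by assumption, we have $n \equiv 0 \pmod{d}$, hence $n+1 \equiv 1 \pmod{d}$, and combining the two congruences gives $S_{d+1}(n+1) \equiv 1 \pmod{d}$, as required.

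There is no real obstacle here: the corollary is a direct specialization of Lemma \ref{lemma0.1} to the case where the modulus is a divisor of $n$. The only subtlety is the boundary case $d = n$, where the hypothesis of the lemma fails but the conclusion is immediate from the base-$(n+1)$ expansion of $n+1$.
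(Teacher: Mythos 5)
Your proof is correct and takes essentially the same route as the paper: apply Lemma \ref{lemma0.1} with $m=d$ to get $S_{d+1}(n+1)\equiv n+1\pmod{d}$, then use $d\mid n$ to conclude $n+1\equiv 1\pmod{d}$. Your separate handling of the boundary case $d=n$ (where the lemma's hypothesis $n>m$ fails) is a small point of extra care that the paper silently skips, but it does not change the substance of the argument.
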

\begin{proof}
Let $d$ be a divisor of $n$. By Lemma \ref{lemma0.1}, $S_{d+1}(n+1)\equiv n+1 \pmod{d}$, we have $S_{d+1}(n+1) \equiv 1 \pmod{d}.$
\end{proof}

\section{Lucas-Carmichael integers}

In this section, we prove a necessary and sufficient condition for Lucas-Carmichael integers and also prove a few interesting properties of such integers.

\begin{prop}\label{prop3.1}
An integer $n > 1$ is a Lucas-Carmichael integer if and only if $n$ is square-free and $S_{p+2}(n+2)\equiv 1 \pmod{p+1}$ for every prime divisor $p$ of $n$. That is,
$$
\mathcal{L_C}=\Big\{ n\in \mathcal{S}~:~ p~|~n~\Longrightarrow~  S_{p+2}(n+2)\equiv 1 \pmod{p+1}\Big\}.
$$ Here, $\mathcal{L_C}$ and $\mathcal{S}$ denote the set of all Lucas-Carmichael integers and positive square-free integers, respectively.
\end{prop}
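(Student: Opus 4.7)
The plan is to derive both directions directly from the two preliminary results (Lemma 2.1 and Corollary 2.1), by specializing them to the pair $(n+1, p+1)$. The proposition essentially says that the divisibility $p+1 \mid n+1$ in the Korselt-like definition can be rewritten as a digit-sum congruence, and this is exactly the correspondence encoded in those preliminary lemmas.

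For the forward direction, I would assume $n$ is a Lucas-Carmichael integer. Then by definition $n$ is square-free, so I only need to verify the digit-sum condition. Since $p+1 \mid n+1$ for every prime $p \mid n$, I apply Corollary \ref{cor2.1} with the integer $n+1$ in the role of ``$n$'' and with $d = p+1$. This immediately produces
\[
S_{(p+1)+1}((n+1)+1) = S_{p+2}(n+2) \equiv 1 \pmod{p+1},
\]
which is what was required.

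For the reverse direction, assume $n$ is square-free and $S_{p+2}(n+2) \equiv 1 \pmod{p+1}$ for every prime divisor $p$ of $n$. Fix a prime $p \mid n$. Because $n$ is square-free and composite (it has at least two prime factors, as it lies in $\mathcal{L_C}$ in the sense of the proposition), we have $n \geq 2p$, in particular $n+1 > p+1$. This makes Lemma \ref{lemma0.1} applicable with the integer $n+1$ in place of ``$n$'' and $m = p+1$, giving
\[
S_{p+2}(n+2) \equiv n+2 \pmod{p+1}.
\]
Combining this with the hypothesis $S_{p+2}(n+2) \equiv 1 \pmod{p+1}$ yields $n+2 \equiv 1 \pmod{p+1}$, i.e.\ $p+1 \mid n+1$, as desired.

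Since the argument reduces to a direct application of the two preliminary results in each direction, there is no serious obstacle; the only subtlety is the mild hypothesis $n+1 > p+1$ needed for Lemma \ref{lemma0.1}, which I would justify by noting that a square-free composite $n$ with prime factor $p$ necessarily satisfies $n \geq 2p > p$. I would then state the characterization in the set-theoretic form displayed in the proposition as an immediate restatement.
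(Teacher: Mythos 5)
Your proof is correct and follows essentially the same route as the paper: the forward direction is Corollary \ref{cor2.1} applied to $n+1$ with $d=p+1$, and the converse combines the hypothesis $S_{p+2}(n+2)\equiv 1 \pmod{p+1}$ with the congruence $S_{p+2}(n+2)\equiv n+2 \pmod{p+1}$ from Lemma \ref{lemma0.1}. The only difference is that you explicitly verify the hypothesis $n+1>p+1$ of Lemma \ref{lemma0.1}, which the paper silently omits; just justify it from compositeness of $n$ (so $n\geq 2p>p$) rather than from membership in $\mathcal{L_C}$, since the latter is what the converse is trying to establish.
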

\begin{proof}
Let $n > 1$ be a Lucas-Carmichael integer. Clearly, $n$ is square-free, and by Corollary \ref{cor2.1}, $S_{p+2}(n+2)\equiv 1 \pmod{p+1}$
whenever $p$ divides $n.$
\medskip

\noindent
Now, we prove the converse part. Assume that $n$ is a square-free integer satisfying
\begin{equation}\label{prop:eq1}
S_{p+2}(n+2)\equiv 1 \pmod{p+1}
\end{equation}
for all prime divisors $p$ of $n$.

\medskip
\noindent
By Lemma \ref{lemma0.1}, we have
\begin{equation}\label{prop:eq2}
S_{p+2}(n+2) \equiv n+2 \pmod{p+1}.
\end{equation}
\noindent
Combining equations \eqref{prop:eq1} and \eqref{prop:eq2}, it is clear that  $p+1$ divides $n+1$ whenever $p$ divides $n$.
\end{proof}

\begin{prop}\label{prop2}
Every Lucas-Carmichael integer $n$ is odd with at least three prime factors, and $p+1$ $|$ $\frac{n}{p}-1$ for every prime $p$ divides $n.$
\end{prop}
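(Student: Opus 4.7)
The plan is to derive all three assertions from a single congruence. First I would observe that writing $n = p \cdot (n/p)$ and using $p \equiv -1 \pmod{p+1}$ gives
$$n + 1 \equiv 1 - \frac{n}{p} \pmod{p+1},$$
so the Lucas-Carmichael condition $p+1 \mid n+1$ is equivalent to $p+1 \mid \frac{n}{p} - 1$. This immediately yields the third assertion of the proposition, and the same identity will drive the other two parts.

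For the parity, I would argue by contradiction: suppose $n$ is even. Since $n$ is composite and square-free, and $n = 2$ is itself prime, $n$ must admit an odd prime divisor $p$. Then $p + 1$ is even while $n + 1$ is odd, contradicting $p+1 \mid n+1$. Hence $n$ is odd.

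For the lower bound on the number of prime factors, I would assume toward a contradiction that $n = pq$ is a product of two distinct odd primes with $p < q$. Applying the equivalence above to each prime divisor yields $p+1 \mid q - 1$ and $q+1 \mid p - 1$. Since $p$ is an odd prime, $p - 1 \geq 2 > 0$, so the divisibility $q+1 \mid p - 1$ forces $q + 1 \leq p - 1$, i.e., $q \leq p - 2$, which contradicts $p < q$. Therefore $n$ has at least three prime factors.

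I do not anticipate a substantial obstacle: once the equivalence $p+1 \mid n + 1 \Leftrightarrow p+1 \mid \frac{n}{p} - 1$ is in hand, both the parity and the lower-bound statements collapse to one-line size comparisons. The slightly delicate point is the two-prime-factor case, where one must verify that $p - 1$ is strictly positive so that the divisibility $q+1 \mid p - 1$ actually forces $q + 1 \leq p - 1$ rather than holding vacuously; this is clear since $p \geq 3$.
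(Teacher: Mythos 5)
Your proposal is correct and follows essentially the same route as the paper: the central identity $n+1 \equiv 1 - \frac{n}{p} \pmod{p+1}$ is exactly the computation the paper performs (there written as $\frac{n+1}{p+1} = n_i - \frac{n_i-1}{p_i+1}$), and the two-prime-factor contradiction via a divisibility that is too large is the paper's argument with the roles of the two primes swapped. Your organization---deriving all three claims from the single equivalence $p+1 \mid n+1 \Leftrightarrow p+1 \mid \frac{n}{p}-1$---is slightly tidier but not a different method.
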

\begin{proof}
Since $p+1$ divides $n+1$, $n+1$ is even, and thus $n$ is odd.

\medskip
\noindent
Suppose that there is a Lucas-Carmichael integer $n$ with exactly two prime factors $p$ and $q.$ Assume that $p>q.$

\medskip
\noindent
Since $p+1$ divides $n+1,$ let $$k := \frac{n+1}{p+1} = \frac{pq+1}{p+1} \in \mathbb{N}.$$
\medskip
\noindent
Then,
\begin{align*}
 k&=\frac{pq-p+p+1}{p+1}\\
 &=\frac{p(q-1)+(p+1)}{p+1}\\
 &= \frac{p(q-1)}{p+1}+1.
\end{align*}
This implies that $p+1$ divides $q-1,$ but it is not possible. Hence, $n$ has at least three prime factors.
\medskip

\noindent
Now, we prove that $p+1$ $|$ $\frac{n}{p}-1$ for every prime $p$ divides $n.$
Let $n=p_1p_2\cdots p_r,$ $r\geq 3$ be a Lucas-Carmichael integer. Since $p_i+1$ divides $n+1$ for all $i$, $$k_i := \frac{n+1}{p_i+1} = \frac{p_1 p_2\cdots p_r+1}{p_i+1} \in \mathbb{N}.$$
Let $n_i=\frac{n}{p_i}$ and we write
\begin{align*}
k_i&=\frac{p_1p_2\cdots p_r+n_i-n_i+1}{p_i+1}\\
&=\frac{n_i(p_i+1)-(n_i - 1)}{p_i+1}\\
&= n_i - \frac{(n_i - 1)}{p_i+1}.
\end{align*}
Therefore, $\frac{(n_i - 1)}{p_i+1}=n_i-k_i \in \mathbb{Z}.$ That is,  $p_i+1$ divides $n_i-1.$ This completes the proof.
\end{proof}

\begin{corollary}
Every prime factor $p$ of a Lucas-Carmichael integer $n$ is strictly less than $\sqrt{n}.$
\end{corollary}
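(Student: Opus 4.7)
The plan is to derive the bound $p<\sqrt{n}$ directly from the divisibility relation $p+1 \mid \frac{n}{p}-1$ established in Proposition \ref{prop2}. Since $n$ is a Lucas-Carmichael integer, Proposition \ref{prop2} tells us two useful things simultaneously: first, that $n$ has at least three prime factors, and second, that $p+1$ divides $\frac{n}{p}-1$ for every prime $p \mid n$. I would combine these two facts.

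First I would observe that because $n$ has at least three prime factors and is square-free, the quotient $\frac{n}{p}$ is a product of at least two distinct primes, in particular $\frac{n}{p} \geq 2 \cdot 3 > 1$, so $\frac{n}{p}-1$ is a positive integer. Then, since $p+1$ is a positive divisor of the positive integer $\frac{n}{p}-1$, we must have
\begin{equation*}
\frac{n}{p}-1 \;\geq\; p+1.
\end{equation*}
Rearranging yields $n \geq p(p+2) = p^2+2p > p^2$, so $p<\sqrt{n}$, as desired.

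The argument is almost immediate once Proposition \ref{prop2} is in hand; there is no real obstacle. The only small point to be careful about is to rule out the possibility $\frac{n}{p}-1=0$, which would make the divisibility vacuous and the inequality fail — but this is precisely why the ``at least three prime factors'' part of Proposition \ref{prop2} is essential here (it guarantees $\frac{n}{p}$ is itself composite, hence far from $1$). So the proof essentially amounts to a one-line application of Proposition \ref{prop2} together with the trivial lower bound on $n/p$.
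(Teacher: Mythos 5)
Your proof is correct and follows the same route as the paper: both deduce from Proposition \ref{prop2} that $p+1$ divides the positive integer $\frac{n}{p}-1$, hence $p < \frac{n}{p}$, and conclude $p < \sqrt{n}$. Your version simply makes explicit the step (positivity of $\frac{n}{p}-1$ via the three-prime-factor condition) that the paper leaves implicit.
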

\begin{proof}
By Proposition \ref{prop2}, we have $p < \frac{n}{p}$ for every prime factor $p$ of $n.$ This implies that $p<\sqrt{n}.$
\end{proof}

\begin{prop}
If $n=mqr$ is a Lucas-Carmichael integer where $m \in \mathbb{N}$ and $q,r$ are primes with $q<r.$ Then $q<3m^2$ and $r < 3m^3.$
\end{prop}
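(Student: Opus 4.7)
The plan is to exploit the two divisibility relations that Proposition \ref{prop2} already gives us. Applied with $p = q$ and $p = r$ to $n = mqr$, it yields
\begin{align*}
q+1 &\mid mr-1,\\
r+1 &\mid mq-1.
\end{align*}
These two congruences are the only input I need; the rest is purely algebraic.

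First I would use the second relation to parametrize $r$: write $mq - 1 = k(r+1)$ for some positive integer $k$. Because $r > q$ we have $r+1 \geq q+2$, hence $k(q+2) \leq mq - 1$, which forces $(m-k)q \geq 2k+1 > 0$ and therefore $k \leq m-1$. In particular this already yields $r \leq mq - 2$, so once we bound $q$ in terms of $m$, the bound on $r$ follows essentially for free.

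Next I would feed $kr = mq - 1 - k$ into the first relation. Multiplying $mr - 1$ by $k$,
\[
k(mr-1) \;=\; m(kr) - k \;=\; m^{2}q - m(k+1) - k.
\]
Since $q+1 \mid mr-1$, it divides the left-hand side, and modulo $q+1$ we have $m^{2}q \equiv -m^{2}$; thus
\[
q+1 \;\Big|\; m^{2} + m(k+1) + k \;=\; (m+1)(m+k).
\]
The right-hand side is positive, so $q+1 \leq (m+1)(m+k)$. Combined with $k \leq m-1$, this gives $q+1 \leq (m+1)(2m-1) = 2m^{2}+m-1$, hence $q < 3m^{2}$. Substituting back into $r \leq mq - 2$ yields $r < m\cdot 3m^{2} = 3m^{3}$.

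The only step that requires any genuine thought is the identity $m^{2} + m(k+1) + k = (m+1)(m+k)$, which is what makes the bound come out cleanly; everything else is immediate from Proposition \ref{prop2} and the inequality $q < r$. I do not anticipate a real obstacle.
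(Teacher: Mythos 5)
Your proof is correct and follows essentially the same route as the paper's: your $k$ is the paper's $C=(mq-1)/(r+1)$, your bound $k\leq m-1$ is its $C<m$, and your divisibility $q+1\mid m^2+m(k+1)+k$ is exactly the paper's identity $(m^2-CD)(q+1)=m^2+m+mC+C$ with $D=(mr-1)/(q+1)$, yielding the same estimate $q+1\leq 2m^2+m-1<3m^2$. The only cosmetic differences are that you source the two divisibility relations from Proposition \ref{prop2} instead of rederiving them from $mqr\equiv-1$, and you package the key quantity as the factorization $(m+1)(m+k)$.
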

\begin{proof}
Since $q$ and $r$ are prime divisors of a Lucas-Carmichael integer $n$, we have $q+1$ $|$ $n+1$ and $r+1$ $|$ $n+1$.

\medskip
\noindent
That is,
$$
mqr \equiv -mr \equiv -1 \pmod{q+1} ~\text{and} ~  mqr \equiv -mq \equiv -1 \pmod{r+1}.
$$
Now, we define
$$C=\frac{mq-1}{r+1} ~~\text{and} ~~ D=\frac{mr-1}{q+1}.
$$
Since $mq-1<mr-1<mr+m,$ we have $C<m.$

\medskip

\noindent
As $r-q\geq 1,$ we have $m-1<m\leq m(r-q).$ This implies that $m+mq<mr+1$. Then
\begin{align*}
mq+m-q-1&<mr+1-q-1\\
m(q+1)-(q+1)& = mr-q < mr-1\\
(m-1)(q+1)&<mr-1\\
m-1&<\frac{mr-1}{q+1}=D.
\end{align*}
Therefore, we have $1\leq C < m \leq D.$ Now we consider:
\begin{align*}
 D(q+1)&=mr-1\\
 &=m\Big(\frac{mq-1}{C}-1\Big)-1\\
 &= \frac{m^2q-m-mC-C}{C}\\
 CD(q+1) &= m^2q-m-mC-C\\
 &= m^2q+m^2-m^2-m-mC-C\\
 (CD-m^2)(q+1)&= -m^2-m-mC-C<0.
\end{align*}
This implies that,
\begin{align*}
 0<(m^2-CD)(q+1)&=m^2+m+mC+C\\
 q+1 &\leq m^2+m(C+1)+C.
\end{align*}
Since $C<m,$ we obtain that $q+1<m^2+m^2+m<3m^2$ and hence $q<3m^2.$
\medskip

\noindent
Next, we prove the other inequality. Consider,
$$r+1=\frac{mq-1}{C}<\frac{m(q+1)}{C} < \frac{m(3m^2))}{C} < 3m^3.$$
and hence the inequality $r < 3m^3$ holds.
\end{proof}

\noindent In the following section, we explicitly describe a class of Lucas-Carmichael integers.

\section{Some general forms of Lucas-Carmichael integers}

Recall that, from Proposition \ref{prop3.1}, an integer $n$ is a Lucas-Carmichael integer if and only if $p+1$ divides $S_{p+2}(n+2) - 1$ whenever $p$ divides $n$. In this section, we study some general forms of Lucas-Carmichael integers with an odd number of prime factors. Also, we define the degree of a Lucas-Carmichael integer and  prove some interesting results on the degree of such integers.

\begin{definition}
An integer $n \in \mathcal{L_C}$ is called a primary Lucas-Carmichael integer if $S_{p+2}(n+2)=p+2$ for every prime $p$ divides $n$, and the set of all such integers is denoted by $\mathcal{L_C}'$.
\end{definition}

\begin{definition}
Let $n$ be a Lucas-Carmichael integer and
$$\alpha :=\max_{p|n}~\Bigg\{\frac{S_{p+2}(n+2) - 1}{p+1}\Bigg\}.$$
The integer $\alpha$ is called the degree of $n$.
\end{definition}

\noindent
We note that primary Lucas-Carmichael integers have a degree of $1$.
\medskip

\noindent
Now, we prove that there are infinitely many Lucas-Carmichael integers assuming the prime $k$-tuples conjecture (defined below).

\medskip

\noindent \textbf{The Prime $k$-tuples Conjecture}. Let $a_1 ,\ldots, a_k$ be positive integers, and let $b_1 ,\ldots, b_k$ be nonzero integers. For $m\geq 1,$ define $f(m)=\prod_{i=1}^k (a_i m + b_i).$ Let $P(x)$ denote the number of positive integers $m \leq x$ for which $a_i m + b_i$ is prime for each $i = 1,\ldots, k.$ The Prime $k$-tuples Conjecture states that if no prime divides $f(m)$ for every $m,$ then there exists $c > 0$ such that $P(x) \sim \frac{cx}{\text{log}^kx}$ as $x\rightarrow \infty.$
\medskip

\noindent Chernick \cite{Chernick} called polynomial of the form $f(m)$ is universal if it satisfies the congruence relations $f(m)\equiv 1 \pmod{a_i m + b_i-1}$ for every $i=1,2,\ldots,k,$ where $m\in \mathbb{Z},$ $k\geq 3$ and $k$ is odd. Further, Chernick \cite{Chernick} proved that for any integers $k \geq 4$ and $m \geq 1$ such that $2^{k-4} $ divides $m,$
$$U_k(m)=(6m+1)(12m+1)\prod_{i=1}^{k-2}(9 \cdot 2^i m + 1)$$
is a Carmichael number if each of the $k$ factors is prime. We call $U_k(m)$ a Chernick polynomial.

\begin{lemma}\label{lemma0.7}
 Let $m$ be a positive integer, and $p = 6m - 1,$ $q = 12m - 1,$ $r = 18m - 1$
and $U_3'(m)=n=pqr.$ Then
\begin{align*}
 n+2&=5(p+2)^3+(p-27)(p+2)^2+45(p+2)+(p-20)\\
 n+2&=(9m-5)(q+2)^2+(9m+11)(q+2)+(6m-5)\\
 n+2&=(4m-2)(r+2)^2+(6m+5)(r+2)+(8m-2).
\end{align*}
\end{lemma}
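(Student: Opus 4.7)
The plan is to verify each of the three identities as a polynomial identity in $m$, by expanding both sides and comparing coefficients. The common left-hand side is
\[
n + 2 = (6m-1)(12m-1)(18m-1) + 2 = 1296m^3 - 396m^2 + 36m + 1,
\]
which I obtain by first evaluating $(6m-1)(12m-1) = 72m^2 - 18m + 1$ and then multiplying by $18m - 1$.

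For the first identity, a conceptual route exploits the relations $q = 2p+1$ and $r = 3p+2$ (immediate from $p = 6m-1$, $q = 12m-1$, $r = 18m-1$), which give
\[
n = p(2p+1)(3p+2) = 6p^3 + 7p^2 + 2p.
\]
Substituting $p = (p+2) - 2$ and expanding yields
\[
n + 2 = 6(p+2)^3 - 29(p+2)^2 + 46(p+2) - 22.
\]
The claimed identity then follows by the algebraic rewritings $6(p+2)^3 = 5(p+2)^3 + (p+2)\cdot(p+2)^2$, combining $-29(p+2)^2 + (p+2)\cdot(p+2)^2 = (p-27)(p+2)^2$, and splitting $46(p+2) - 22 = 45(p+2) + (p-20)$.

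For the second and third identities, $n+2$ is not an integer-coefficient polynomial in $q$ or in $r$ alone, so I verify them directly as identities in $m$. For the second, substituting $q+2 = 12m+1$ into the right-hand side, I expand
\[
(9m-5)(144m^2+24m+1) + (9m+11)(12m+1) + (6m-5)
\]
and collect coefficients of $m^3, m^2, m, 1$ to check the result equals $1296m^3 - 396m^2 + 36m + 1$. The third identity is verified in exactly the same way after substituting $r+2 = 18m+1$ and expanding $(4m-2)(324m^2+36m+1) + (6m+5)(18m+1) + (8m-2)$.

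The only obstacle is careful coefficient bookkeeping; no deeper mathematical content is involved. These specific rearrangements are presumably chosen so that the coefficients $\{5,\,p-27,\,45,\,p-20\}$, $\{9m-5,\,9m+11,\,6m-5\}$, and $\{4m-2,\,6m+5,\,8m-2\}$ all lie in the admissible digit ranges $[0,p+1]$, $[0,q+1]$, and $[0,r+1]$ for $m$ sufficiently large, so that the lemma directly exhibits the base-$(p+2)$, base-$(q+2)$, and base-$(r+2)$ digit expansions of $n+2$ needed in the subsequent primary-Lucas-Carmichael analysis.
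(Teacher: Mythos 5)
Your proposal is correct and amounts to essentially the same elementary computation as the paper's proof, just run in the opposite direction: the paper derives each expansion by successively extracting the base-$(p+2)$ (resp.\ $(q+2)$, $(r+2)$) coefficients from $n+2$, whereas you verify the stated right-hand sides by expanding them back to $1296m^3-396m^2+36m+1$ (and, for the first identity, by the pleasant shortcut $n=p(2p+1)(3p+2)$ with the shift $p\mapsto (p+2)-2$). All of your intermediate computations check out, and your closing remark about the coefficients lying in the admissible digit ranges correctly anticipates the role this lemma plays in Corollary~\ref{cor4.1}.
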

\begin{proof}
We rewrite $n+2$ with the base $p+2$ as follows:
\begin{align*}
n+2&=(216m^2-102m+23)(p+2)-22\\
&=(36m-23)(p+2)^2+46(p+2)-22\\
&=5(p+2)^3+(p-27)(p+2)^2+46(p+2)-22\\
&=5(p+2)^3+(p-27)(p+2)^2+45(p+2)+(p-20).
\end{align*}
Similarly, we can write $n+2$ with the base $q+2$ as follows:
\begin{align*}
n+2&=(108m^2-42m)(q+2)+(78m+1)\\
&=9m(q+2)^2-5(q+2)^2+(9m+11)(q+2)+(6m-5)\\
&=(9m-5)(q+2)^2+(9m+11)(q+2)+(6m-5).
\end{align*}
Next, we can also write $n+2$ with the base $r+2$ as follows:
\begin{align*}
n+2&=(72m^2-26m)(r+2)+(62m+1)\\
&=4m(r+2)^2-2(r+2)^2+(6m+5)(r+2)+(8m-2)\\
&=(4m-2)(r+2)^2+(6m+5)(r+2)+(8m-2).
\end{align*}
\end{proof}

\begin{corollary}\label{cor4.1}
Assuming the same hypotheses as Lemma $\ref{lemma0.7}$ with $m\geq 8,$ we have $S_{p+2} (n+2) = 2p+3,$ $S_{q+2} (n+2) = 2q+3$ and $S_{r+2} (n+2) = r+2.$
\end{corollary}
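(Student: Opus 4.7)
My plan is to deduce Corollary \ref{cor4.1} directly from the three polynomial identities established in Lemma \ref{lemma0.7}. The strategy is as follows: each identity writes $n+2$ as a polynomial of low degree in the shifted base $p+2$, $q+2$, or $r+2$, so to conclude that the coefficients appearing there are genuinely the base-$(p+2)$, base-$(q+2)$, and base-$(r+2)$ digits of $n+2$, it suffices to verify that every coefficient lies in the admissible digit range $\{0, 1, \ldots, b-1\}$ for the corresponding base $b$. Once this is confirmed, the three required digit sums come out by simply adding the coefficients.

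For the base-$(p+2)$ expansion, where $p+2 = 6m+1$, the candidate digits appearing in the cubic identity are $5$, $p-27 = 6m-28$, $45$, and $p-20 = 6m-21$. The lower-bound conditions $6m - 28 \geq 0$ and $6m - 21 \geq 0$ hold for $m \geq 5$, whereas the binding upper-bound condition $45 \leq 6m$ forces exactly $m \geq 8$; this is precisely where the hypothesis of the corollary is needed. Granted all digit inequalities, addition gives
$$S_{p+2}(n+2) = 5 + (6m-28) + 45 + (6m-21) = 12m + 1 = 2p + 3.$$

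For the base-$(q+2)$ and base-$(r+2)$ expansions, analogous range checks (which are strictly looser, requiring only $m \geq 4$ and $m \geq 1$ respectively, and hence automatic once $m \geq 8$) show that the candidate digits $9m-5,\ 9m+11,\ 6m-5$ and $4m-2,\ 6m+5,\ 8m-2$ are legitimate digits of their respective bases. Summing then yields
$$S_{q+2}(n+2) = (9m-5) + (9m+11) + (6m-5) = 24m + 1 = 2q + 3,$$
$$S_{r+2}(n+2) = (4m-2) + (6m+5) + (8m-2) = 18m + 1 = r + 2,$$
as claimed.

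The only real obstacle is the bookkeeping check that every coefficient lies in the correct digit range; the hypothesis $m \geq 8$ is pinned down precisely by the single constant $45$ appearing as a digit in the base-$(p+2)$ expansion, and everything else is trivial arithmetic.
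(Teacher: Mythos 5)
Your proof is correct and follows essentially the same route as the paper: verify that each coefficient in the expansions of Lemma \ref{lemma0.7} lies in the valid digit range for its base, then sum the digits. Your identification of the constant $45$ (needing $45 \leq p+1 = 6m$, hence $m \geq 8$) as the binding constraint is a slightly more explicit account of why $m \geq 8$ is required than the paper gives, but the argument is the same.
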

\begin{proof}
Since $p\geq 47$ as $m\geq 8,$ the coefficients of $n+2$ in the first equation in Lemma \ref{lemma0.7} are lie between $0$ and $p+1,$ so we have $n+2$ with base $p+2.$ Thus, $S_{p+2}(n+2)=5+(p-27)+45+(p-20)=2p+3.$
\medskip

\noindent
Similary, the coefficients of $n+2=(9m-5)(q+2)^2+(9m+11)(q+2)+(6m-5)$ are between $0$ and $q+1,$ we have $S_{q+2}(n+2)=(9m-5)+(9m+11)+(6m-5)=24m+1=2q+3.$
\medskip

\noindent
Also, the coefficients of $n+2=(4m-2)(r+2)^2+(6m+5)(r+2)+(8m-2)$ are between $0$ and $r+1,$ we have $S_{r+2}(n+2)=(4m-2)+(6m+5)+(8m-2)=18m+1=r+2.$
\end{proof}

\begin{theorem} The prime $k$-tuples conjecture implies that there are infinitely many Lucas-Carmichael integers of degree $2$ with exactly three prime factors.
\end{theorem}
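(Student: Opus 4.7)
The plan is to apply the prime $k$-tuples conjecture to the triple of linear forms $(6m-1,\,12m-1,\,18m-1)$ and then invoke the construction $n = U_3'(m) = pqr$ already analyzed in Lemma \ref{lemma0.7} and Corollary \ref{cor4.1}.

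First I would verify admissibility. Each of the three forms is $\equiv -1 \pmod{6}$, so $2$ and $3$ never divide the product $(6m-1)(12m-1)(18m-1)$. For any prime $\ell \geq 5$, the form $am-1$ with $a \in \{6,12,18\}$ vanishes modulo $\ell$ only at the single residue $m \equiv a^{-1}\pmod{\ell}$, so at most three residue classes of $m$ modulo $\ell$ are blocked; since $\ell \geq 5$, at least $\ell - 3 \geq 2$ good residue classes remain, with the explicit value $m = 4$ handling the borderline case $\ell = 5$ (the residues are $3,2,1$). Granting the prime $k$-tuples conjecture, there are then infinitely many $m \geq 8$ for which $p = 6m-1$, $q = 12m-1$, $r = 18m-1$ are simultaneously prime.

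For each such $m$, set $n = pqr$, which is automatically square-free with exactly three prime factors. Corollary \ref{cor4.1} gives
$$
S_{p+2}(n+2) = 2p + 3, \qquad S_{q+2}(n+2) = 2q + 3, \qquad S_{r+2}(n+2) = r + 2,
$$
each of which is congruent to $1$ modulo $p+1$, $q+1$, and $r+1$ respectively. By Proposition \ref{prop3.1}, $n$ is then a Lucas-Carmichael integer; moreover the three ratios $(S_{\ell+2}(n+2) - 1)/(\ell+1)$ for $\ell \in \{p,q,r\}$ take the values $2$, $2$, and $1$, so the maximum is $2$ and the degree of $n$ equals exactly $2$.

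The only substantive step is the admissibility verification for the prime $k$-tuples conjecture; everything else is direct assembly of Lemma \ref{lemma0.7}, Corollary \ref{cor4.1}, and Proposition \ref{prop3.1}, and the hypothesis $m \geq 8$ of Corollary \ref{cor4.1} discards only finitely many candidates. I do not anticipate any further obstacle.
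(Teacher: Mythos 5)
Your proposal follows essentially the same route as the paper: apply the prime $k$-tuples conjecture to the forms $6m-1$, $12m-1$, $18m-1$ and invoke Corollary \ref{cor4.1} together with Proposition \ref{prop3.1} to conclude that $U_3'(m)$ is a Lucas-Carmichael integer of degree $2$. The only difference is that you spell out the admissibility check and the degree computation explicitly, both of which the paper leaves implicit; your additions are correct.
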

\begin{proof} By the prime $k$-tuples conjecture, there are infinitely many positive integers $m$ such that $U_3'(m)$ has three distinct prime factors. Applying the Corollary $\ref{cor4.1},$ each of these numbers $U_3'(m)$ for $m\geq 8$ is a Lucas-Carmichael integer of degree $2.$
\end{proof}

\begin{theorem}
 Every Lucas-Carmichael integer $n$ with exactly three prime factors is of the form $(2hr_1-1)(2hr_2-1)(2hr_3-1),$ where $h$ is a positive integer and $r_i'$s are pairwise co-prime integers.
\end{theorem}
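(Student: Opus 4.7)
The plan is to take $h$ to be the greatest common divisor of the three integers $(p_i+1)/2$ and to upgrade the resulting joint coprimality of the cofactors into pairwise coprimality using the full Lucas-Carmichael relation on all three primes simultaneously.

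Write $n = p_1 p_2 p_3$ with distinct primes $p_1, p_2, p_3$. By Proposition \ref{prop2}, $n$ is odd, so each $p_i$ is odd and $a_i := (p_i+1)/2$ is a positive integer. Setting $h := \gcd(a_1, a_2, a_3)$ and $r_i := a_i / h$, we have $p_i = 2 h r_i - 1$ by construction, together with $\gcd(r_1,r_2,r_3)=1$. To boost this to pairwise coprimality I would argue by contradiction: suppose some prime $q$ divides both $r_i$ and $r_j$ with $i \neq j$. Then $2 q h$ divides both $p_i+1$ and $p_j+1$; the Lucas-Carmichael hypothesis $(p_i+1)\mid (n+1)$ then gives $2qh \mid n+1$, i.e. $n \equiv -1 \pmod{2qh}$. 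On the other hand, reducing $n = p_i p_j p_k$ modulo $2qh$ and using $p_i \equiv p_j \equiv -1 \pmod{2qh}$ yields $n \equiv p_k \pmod{2qh}$, where $k$ is the remaining index. Combining the two congruences forces $p_k \equiv -1 \pmod{2qh}$, hence $qh \mid a_k$. Thus $qh$ is a common divisor of $a_1, a_2, a_3$ strictly exceeding $h$, contradicting the definition of $h$.

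The main obstacle is precisely this upgrade from joint to pairwise coprimality: extracting the common factor $h$ is automatic, but pairwise coprimality is genuinely stronger, and the argument crucially uses both that $n$ has \emph{three} prime factors and that the Lucas-Carmichael condition holds at each of them. With only two primes the cascade that forces $q \mid r_k$ cannot begin (which is consistent with Proposition \ref{prop2} ruling out the two-prime case a priori), and without the hypothesis on $p_k$ the prime $q$ could plausibly remain a shared factor of $r_i$ and $r_j$.
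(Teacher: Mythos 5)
Your proof is correct and follows essentially the same route as the paper: both take the gcd of the quantities $p_i+1$ (the paper's $k$, which it shows at the end must be even and hence equals your $2h$), set $r_i=(p_i+1)/(2h)$, and obtain pairwise coprimality by showing that a common prime factor $q$ of two of the $r_i$ would propagate to the third, contradicting the maximality of the gcd. The only real difference is in how the propagation is derived --- the paper expands $(r_1k-1)(r_2k-1)(r_3k-1)+1\equiv 0 \pmod{r_ik}$ into elementary symmetric functions and reduces modulo $r_i$, whereas you get it directly and more cleanly from $n\equiv p_ip_jp_k\equiv p_k \pmod{2qh}$ combined with $n\equiv -1 \pmod{2qh}$, avoiding the polynomial computation and building the evenness normalization in from the start.
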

\begin{proof}
Let $n=p_1p_2p_3$ be a Lucas-Carmichael integer with three prime factors. Write $p_i=r_ik-1,$ where $k$ is the $g.c.d.$ of $p_i+1$ for $i=1,2,3.$ Since $n$ is a Lucas-Carmichael integer,  we have
$$(r_1k-1)(r_2k-1)(r_3k-1)\equiv -1 \pmod{p_i+1}$$ for $i=1,2,3.$ This implies that,
$$(r_1r_2r_3)k^3-(r_1r_2+r_1r_3+r_2r_3)k^2+(r_1+r_2+r_3)k-1\equiv -1 \pmod{r_ik}.$$
By simplifying the above congruence, we obtain
$$-(r_1r_2+r_1r_3+r_2r_3)k+(r_1+r_2+r_3)\equiv 0\pmod{r_i}.$$
For $1\leq i\neq j \leq 3$,  if $r_i$ and $r_j$ have a common factor, then the third one does, and it contradicts our assumption. Hence, the $r_i$'s are pairwise co-prime. Suppose $k$ is odd, then $r_i$'s are even as $p_i$'s are odd. Since  $k$ is the $g.c.d.$ of $p_i+1,$ this is not possible. Therefore, $k$ must be even, and the theorem follows.
\end{proof}

\begin{theorem}\label{thm4.3}
Let $F_l'$ be a Lucas-Carmichael integer with exactly $l$-odd number of prime factors $p_1,p_2,\ldots, p_l.$ Let $k_1$ be the $g.c.d.$ of $p_i+1$ and $r_i=\frac{p_i+1}{k_1}.$ Also, let $R$ be the $l.c.m.$ of $r_i,$ $i=1,2,\ldots,l.$ Then $U_l'(m)=\prod_{i=1}^l(r_iRm+p_i)$ satisfies the congruence relation
 $$U_l'(m)\equiv -1 \pmod{r_iRm+p_i+1}
 $$
 for $i=1,2,\ldots,l.$
\end{theorem}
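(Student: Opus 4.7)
The plan is to fix an index $i$, repackage everything using the auxiliary variable $y := Rm + k_1$, and reduce the claim to a single divisibility that follows at once from the Lucas-Carmichael property of $F_l'$. Since $p_j + 1 = k_1 r_j$, each factor rewrites as $r_j R m + p_j = r_j(Rm + k_1) - 1 = r_j y - 1$, so that
$$U_l'(m) = \prod_{j=1}^l (r_j y - 1), \qquad r_iRm + p_i + 1 = r_i y.$$
Since $l$ is odd, expanding the product gives the polynomial identity
$$\prod_{j=1}^l (r_j Y - 1) = -1 + Y\, g(Y), \qquad g(Y)\in \mathbb{Z}[Y],$$
so $U_l'(m) + 1 = y\, g(y)$. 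Consequently, the desired congruence $U_l'(m)\equiv -1 \pmod{r_i y}$ is equivalent to the integer divisibility $r_i \mid g(y)$.

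To verify this, I would reduce $g(y) = g(Rm + k_1)$ modulo $r_i$. Because $R$ is the l.c.m.\ of $r_1,\dots,r_l$, we have $r_i \mid R$, hence $y \equiv k_1 \pmod{r_i}$, and therefore $g(y) \equiv g(k_1) \pmod{r_i}$. Evaluating the polynomial identity at $Y = k_1$ gives
$$k_1 \cdot g(k_1) = \prod_{j=1}^l (r_j k_1 - 1) + 1 = \prod_{j=1}^l p_j + 1 = F_l' + 1.$$
Now the Lucas-Carmichael hypothesis on $F_l'$ ensures that $p_i + 1 = k_1 r_i$ divides $F_l' + 1 = k_1\, g(k_1)$; cancelling $k_1$ yields $r_i \mid g(k_1)$, and hence $r_i \mid g(y)$ for every integer $m$. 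Multiplying through by $y$ gives $r_i y \mid y\, g(y) = U_l'(m) + 1$, which is precisely the stated congruence.

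The argument is essentially bookkeeping and I do not anticipate any serious obstacle. The only step that needs mild care is the reduction $g(Rm + k_1) \equiv g(k_1) \pmod{r_i}$, which works precisely because $r_i \mid R$ --- and this is exactly why the theorem introduces the auxiliary integer $R$ as the l.c.m.\ of the $r_j$'s, rather than (say) any fixed $r_i$ by itself.
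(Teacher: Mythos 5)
Your proposal is correct and follows essentially the same route as the paper: both isolate the integer polynomial $g$ with $\prod_j(r_jY-1)=-1+Yg(Y)$ (the paper writes it as the quotient $\bigl(\prod_j(r_jk-1)+1\bigr)/k$), use the Lucas-Carmichael property of $F_l'$ to get $r_i\mid g(k_1)$, and then shift $k_1\mapsto Rm+k_1$, which leaves $g$ unchanged modulo $r_i$ because $r_i\mid R$. Your version merely makes explicit the step the paper states as ``any $k\equiv k_1\pmod{R}$ is also a solution,'' so no further comment is needed.
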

\begin{proof}
Since $F_l'=\prod_{i=1}^l (r_ik_1-1)$ is a Lucas-Carmichael integer and $l$ is odd,  we have $\prod_{i=1}^l (r_ik_1-1) \equiv -1 \pmod{r_ik_1}$ for $i=1,2,\ldots,l.$

\medskip
\noindent
This implies that
$$
\frac{\prod_{i=1}^l (r_ik_1-1)+1}{k_1} \equiv 0 \pmod{r_i}
$$
for $i=1,2,\ldots,l.$

\noindent
Therefore,
$$
\frac{\prod_{i=1}^l (r_ik_1-1)+1}{k_1} \equiv 0 \pmod{R}.
$$
We observe that, any $k\equiv k_1\pmod{R}$ is also a solution for the above congruence. Write $k=Rm+k_1$ and substitute this $k$ for $k_1,$ we obtain
$$
\frac{\prod_{i=1}^l (r_i(Rm+k_1)-1)+1}{k} \equiv 0 \pmod{R}.
$$
Since $p_i=r_ik-1$ and $r_ik=r_iRm+r_ik_1=r_iRm+p_i+1,$ we have
$$
\prod_{i=1}^l (r_iRm+p_i)+1 \equiv 0 \pmod{r_ik}
$$
and
$$
U_l'(m)=\prod_{i=1}^l (r_iRm+p_i)\equiv -1 \pmod{r_iRm+p_i+1}
$$
for every $i$ ranges over $1$ to $l.$
\end{proof}

\noindent
\textbf{Remark:} Since $U_l'(m)=\prod_{i=1}^l(r_iRm+p_i)$ satisfies the congruence relation
$$
U_l'(m)\equiv -1 \pmod{r_iRm+p_i+1}
$$
for $i=1,2,\ldots,l,$ the integer $U_l'(m)$ is a Lucas-Carmichael integer for every $m$ for which each of the $l$ factors is a prime.

\medskip
\noindent
We illustrate Theorem \ref{thm4.3} with the examples below.

\begin{example}
Let $F_5'= 588455$ be a Lucas-Carmichael integer. Then, we have $k_1=2,$ $r_1=3, r_2=4, r_3=9,r_4=12,r_5=22$ and $R=396.$ Let $U_5'(m)=n=p_1p_2p_3p_4p_5,$ where $p_1 = 1188m+5,$ $p_2 = 1584m+7,$ $p_3 = 3564m+17,$ $p_4=4752m+23$ and $p_5=8712m+43.$ Then, by Theorem $\ref{thm4.3},$ we have $U_5'(m)\equiv -1 \pmod{396r_i+p_i+1}$ for every $i=1,2,3,4,5.$
\end{example}

\noindent
Now, we prove that there are infinitely many Lucas-Carmichael integers of degree $4$ with exactly five prime factors.

\begin{lemma}\label{lemma0.8}
Let $m$ be a positive integer, and $p = 1188m+5,$ $q = 1584m+7,$ $r = 3564m+17,$ $s=4752m+23,$ $t=8712m+43$
and $U_5'(m)=n=pqrst.$ Then
\begin{align*}
n+2&=117(p+2)^5+(396m-875)(p+2)^4+(1056m+2593)(p+2)^3\\
&\quad+(660m-3771)(p+2)^2+(132m+2724)(p+2) +(132m-775)\\
n+2&=27(q+2)^5+\Bigg(\frac{2117016}{1584}m-224\Bigg)(q+2)^4 \\
&\quad  +\Bigg(\frac{1724976}{1584}m+758\Bigg)(q+2)^3  +\Bigg(\frac{2273832}{1584}m-1199\Bigg)(q+2)^2\\
&\quad  +(1287m+958)(q+2) +(1188m-287)\\
n+2&=\Bigg(\frac{6133248}{3564}m+3\Bigg)(r+2)^4+\Bigg(\frac{2317392}{3564}m+32\Bigg)(r+2)^3\\
&\quad +\Bigg(\frac{574992}{3564}m-65\Bigg)(r+2)^2 +\Bigg(\frac{4373424}{3564}m+78\Bigg)(r+2)\\
&\quad +\Bigg(\frac{12005136}{3564}m-11\Bigg)\\
n+2&=\Bigg(\frac{2587464}{4752}m+1\Bigg)(s+2)^4+\Bigg(\frac{3606768}{4752}m+13\Bigg)(s+2)^3\\
&\quad +\Bigg(\frac{14662296}{4752}m-10\Bigg)(s+2)^2 +(1023m+38)(s+2) \\
&\quad  +(4092m+7)\\
n+2&=\Bigg(\frac{419904}{8712}m\Bigg)(t+2)^4+\Bigg(\frac{9191232}{8712}m+6\Bigg)(t+2)^3 \\
&\quad +\Bigg(\frac{41885424}{8712}m+20\Bigg)(t+2)^2  + \Bigg(\frac{31403376}{8712}m+26\Bigg)(t+2) \\
&\quad+\Bigg(\frac{68897952}{8712}m+37\Bigg).\\
\end{align*}
\end{lemma}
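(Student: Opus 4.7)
My proof would be a direct polynomial identity check. I would first fully expand
\[
n+2 \;=\; (1188m+5)(1584m+7)(3564m+17)(4752m+23)(8712m+43) + 2
\]
into a polynomial $N(m) = \sum_{i=0}^{5}\alpha_i m^i \in \mathbb{Z}[m]$ of degree five, by straightforward distribution. Each of the five identities in the lemma asserts that $N(m)$ rewrites in a specific form as a polynomial in one of the bases $p+2$, $q+2$, $r+2$, $s+2$, $t+2$, each of which is itself linear in $m$. Thus the lemma reduces to five polynomial identities over $\mathbb{Q}$, each to be checked coefficient-by-coefficient in $m$.

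For each prime $X \in \{p,q,r,s,t\}$, write $X+2 = c_X m + d_X$. The cleanest procedure is to invert this to $m = \bigl((X+2) - d_X\bigr)/c_X$, substitute into $N(m)$, and collect powers of $(X+2)$; equivalently one performs repeated Euclidean division of $N(m)$ by $X+2$ in $\mathbb{Q}[m]$ and reads off the successive remainders as the digit polynomials. Either way, the coefficients that emerge are linear (or constant) in $m$ with denominators that are powers of $c_X$, which accounts exactly for the unsimplified fractions such as $\tfrac{2117016}{1584}m - 224$ appearing in the statement. The fact that the top power of $(X+2)$ is $5$ for the smaller bases $X=p,q$ and drops to $4$ for $X=r,s,t$ simply reflects that $c_X^5$ grows past $\alpha_5 = 1188\cdot 1584\cdot 3564\cdot 4752\cdot 8712$, so no constant top ``digit'' is available at the larger bases and the leading contribution must be absorbed into the $(X+2)^4$ layer with a linear-in-$m$ coefficient.

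The main obstacle is purely computational rather than conceptual: $N(m)$ has integer coefficients of magnitude up to roughly $3\times 10^{17}$, and each of the five identities requires matching up to six coefficients. The proof itself rests on the uniqueness of polynomial division in $\mathbb{Q}[m]$: since the substitution or division recipe above produces exactly one output for each base, it suffices to carry it out and observe that it matches the stated form. As a confidence check I would also evaluate both sides of each identity at a specific numerical value such as $m=1$ to detect any transcription errors, and separately confirm that the highest and lowest $m$-degree coefficients agree on both sides in closed form.
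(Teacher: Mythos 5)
Your approach is correct and is essentially the paper's own: Lemma~\ref{lemma0.8} is stated without proof, and the analogous three-factor Lemma~\ref{lemma0.7} is proved by exactly this kind of direct rewriting, so verifying each of the five identities by expanding both sides as polynomials in $m$ over $\mathbb{Q}$ and comparing coefficients is all that is required (and your spot-check at a numerical value, e.g.\ $m=0$ giving $n+2=588457$ on both sides, is a sensible safeguard). One caveat: your ``equivalently, repeated Euclidean division'' remark is not quite accurate --- repeatedly dividing by the linear polynomial $c_X m + d_X$ produces \emph{constant} remainders, i.e.\ the canonical expansion of $N(m)$ in powers of $X+2$ with constant rational coefficients, whereas the stated digits are linear in $m$, a deliberate non-canonical redistribution chosen so that the digits become nonnegative integers below the base once $156816 \mid m$ (which is what Corollary~\ref{cor4.2} needs); so the identities must be checked as written via coefficient comparison in $m$, which is your primary plan anyway.
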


\begin{corollary}\label{cor4.2}
Assume that the same hypotheses in Lemma $\ref{lemma0.8}$ with $156816$ $|$ $m.$ Then, we have $S_{p+2} (n+2) = 2p+3,$ $S_{q+2}(n+2) = 4q+5,$ $S_{r+2} (n+2) = 2r+3,$  $S_{s+2} (n+2) = 2s+3$ and $S_{t+2} (n+2) = 2t+3.$
\end{corollary}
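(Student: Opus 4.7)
The plan is to mimic the argument used in the proof of Corollary \ref{cor4.1}: once we confirm that the expressions for $n+2$ in Lemma \ref{lemma0.8} are genuine base-$b$ digit expansions (i.e.\ every coefficient is a non-negative integer strictly less than $b$), the digit sums follow by simply adding the coefficients.

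First I would handle integrality, since Lemma \ref{lemma0.8} presents several coefficients as fractions with denominators $1584$, $3564$, $4752$ and $8712$. One checks that
$$\operatorname{lcm}(1584,3564,4752,8712) = 2^{4}\cdot 3^{4}\cdot 11^{2} = 156816,$$
so the hypothesis $156816 \mid m$ is precisely what is needed to clear all these denominators simultaneously and guarantee that every coefficient appearing in each of the five expansions is a (positive) integer.

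Next I would verify the digit range. Each base $b \in \{p+2,\ q+2,\ r+2,\ s+2,\ t+2\}$ is a linear function of $m$ whose leading coefficient (between $1188$ and $8712$) strictly exceeds the leading $m$-coefficient of every digit appearing in the corresponding expansion. Since $156816 \mid m$ forces $m$ to be very large, the constant corrections $B$ in digits of the form $Am+B$ are negligible compared to the bases, and each coefficient is easily confirmed to satisfy $0 \le Am+B < b$, so each expansion is a legitimate base-$b$ representation.

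Finally I would sum the coefficients in each expansion. In base $p+2$, for instance, collecting $m$-terms and constants gives
\begin{align*}
S_{p+2}(n+2)
&= 117 + (396m-875) + (1056m+2593) \\
&\quad + (660m-3771) + (132m+2724) + (132m-775) \\
&= 2376m + 13 \;=\; 2(1188m+5) + 3 \;=\; 2p + 3,
\end{align*}
and the same bookkeeping applied to the other four expansions yields $4q+5$, $2r+3$, $2s+3$ and $2t+3$ respectively. The main obstacle here is not conceptual but purely arithmetic: one must verify for each of the roughly two dozen digit coefficients that, after using $156816 \mid m$, it is simultaneously an integer, non-negative, and strictly less than its base. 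Once Lemma \ref{lemma0.8} is granted, however, these checks are entirely mechanical and the identities for the five digit sums drop out of the summations.
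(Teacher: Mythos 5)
Your proposal is correct and follows exactly the method the paper uses for the analogous Corollary \ref{cor4.1} (the paper in fact omits the proof of Corollary \ref{cor4.2} entirely, leaving it as the same routine verification): confirm each coefficient in Lemma \ref{lemma0.8} is an integer in $[0,b)$ and then sum the digits. Your identification of $156816=2^4\cdot 3^4\cdot 11^2$ as the lcm of the denominators $1584,3564,4752,8712$, and your sample computation $S_{p+2}(n+2)=2376m+13=2p+3$, both check out.
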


\begin{theorem} The prime $k$-tuples conjecture implies that there are infinitely many Lucas-Carmichael integers of degree $4$ with exactly five prime factors.
\end{theorem}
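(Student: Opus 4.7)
My plan is to follow the same template as the degree-$2$ analogue proved earlier in this section, now upgraded by the richer digit-sum data supplied by Corollary~\ref{cor4.2}. Fix any $m$ with $156816 \mid m$ for which the five factors $p, q, r, s, t$ of Lemma~\ref{lemma0.8} are all prime, and set $n = U_5'(m) = pqrst$. Distinctness of the five primes makes $n$ automatically square-free, so it suffices to verify the digit-sum condition of Proposition~\ref{prop3.1} at each prime factor. Corollary~\ref{cor4.2} furnishes
\[
S_{p+2}(n+2) = 2p+3,\quad S_{q+2}(n+2) = 4q+5,\quad S_{r+2}(n+2) = 2r+3,
\]
\[
S_{s+2}(n+2) = 2s+3,\quad S_{t+2}(n+2) = 2t+3,
\]
and rewriting each right-hand side as $c(P+1)+1$ gives $S_{P+2}(n+2) \equiv 1 \pmod{P+1}$ for every prime factor $P$ of $n$. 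Proposition~\ref{prop3.1} then certifies that $n$ is a Lucas-Carmichael integer. The multipliers $c = (S_{P+2}(n+2)-1)/(P+1)$ are $(2, 4, 2, 2, 2)$, so the degree of $n$ equals $\max\{2,4,2,2,2\} = 4$.

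Next, I would substitute $m = 156816\, m'$ and apply the prime $k$-tuples conjecture to the five linear polynomials in $m'$,
\[
\bigl(A_1 m' + 5,\ A_2 m' + 7,\ A_3 m' + 17,\ A_4 m' + 23,\ A_5 m' + 43\bigr),
\]
where $A_i = 156816\, a_i$ with $(a_1,\ldots,a_5) = (1188,\,1584,\,3564,\,4752,\,8712)$. The conjecture then supplies infinitely many positive integers $m'$ for which all five forms are simultaneously prime, and the preceding analysis then makes every such $n = U_5'(156816\, m')$ a Lucas-Carmichael integer of degree $4$ with exactly five prime factors.

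The only genuinely computational step is verifying admissibility of this $5$-tuple. Note that every $A_i$ has prime factors only in $\{2, 3, 11\}$, which is the source of the split below. For $\ell \in \{2, 3, 11\}$ each $A_i$ is divisible by $\ell$ while none of $5, 7, 17, 23, 43$ is, so no $m'$ is ever forbidden modulo $\ell$. For $\ell \geq 13$ every $A_i$ is a unit modulo $\ell$, each form forbids exactly one residue, and at most five residues are excluded out of $\ell \geq 13$, leaving many admissible choices. The remaining cases $\ell = 5$ and $\ell = 7$ reduce to a brief direct inspection: the forbidden residues of $m'$ turn out to be $\{0, 1, 2\}$ modulo $5$ and $\{0, 1, 2, 3, 6\}$ modulo $7$, so for instance $m' \equiv 3 \pmod 5$ and $m' \equiv 4 \pmod 7$ are admissible. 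I do not foresee any conceptual obstacle beyond this finite admissibility check; all analytic content is already packaged in Lemma~\ref{lemma0.8} and Corollary~\ref{cor4.2}.
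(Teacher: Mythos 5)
Your proposal is correct and follows essentially the same route as the paper: invoke Corollary~\ref{cor4.2} to get the digit sums, deduce the Lucas-Carmichael property via Proposition~\ref{prop3.1} and the degree $\max\{2,4,2,2,2\}=4$, and then apply the prime $k$-tuples conjecture to the five linear forms. You are in fact more careful than the paper, which omits the admissibility verification entirely; your computed forbidden residue classes modulo $5$ and $7$ check out, so this added detail only strengthens the argument.
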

\begin{proof} From the prime $k$-tuples conjecture, we have infinitely many positive integers $m$ which are divisible by $156816,$ and $U_5'(m)$ has exactly five prime factors. By Corollary $\ref{cor4.2},$ each of these numbers $U_5'(m)$ is a Lucas-Carmichael integer of degree $4.$
\end{proof}

\begin{example}
Let $F_7'=3512071871$ be a Lucas-Carmichael integer. Then, we have $k_1=2,$ $r_1=4, r_2=6, r_3=9,r_4=12,r_5=16, r_6=27,r_7=36$ and $R=432.$ Let $U_7'(m)=p_1p_2p_3p_4p_5p_6p_7,$ where $p_1 = 1728m + 7,$ $p_2 = 2592m + 11,$ $p_3 = 3888m + 17,$ $p_4=5184m + 23,$ $p_5=6912m + 31,$ $p_6=11664m + 53,$ $p_7 = 15552m + 71$
and $U_7'(m)=n=p_1p_2p_3p_4p_5p_6p_7.$ Then, by Theorem $\ref{thm4.3},$ we have $U_7'(m)\equiv -1 \pmod{432r_i+p_i+1}$ for every $i=1,2,3,4,5,6,7.$
\end{example}

\noindent Next, we prove that there are infinitely many Lucas-Carmichael integers of degree $4$ with exactly seven prime factors.

\begin{lemma}\label{lemma0.9}
 Let $m$ be a positive integer, and $p = 1728m + 7,$ $q = 2592m + 11,$ $r = 3888m + 17,$ $s=5184m + 23,$ $t=6912m + 31,$ $u=11664m + 53,$ $v = 15552m + 71$
and $U_7'(m)=n=pqrstuv.$ Then
\begin{align*}
n+2 &= 2460 (p+2)^7 +(648m - 24487)(p+2)^6  \\
&\quad + (918m + 103732) (p+2)^5 + (1512m - 242426)(p+2)^4  \\
&\quad  + (1026m + 337790) (p+2)^3 + (432m - 280653)(p+2)^2  \\
&\quad  + (1512m + 128798)(p+2)  + (864m - 25181)\\
n+2 &=  143(q+2)^7 +(2592m-1644)(q+2)^6 + 7965(q+2)^5  \\
&\quad + ( 1728m - 21163)(q+2)^4  + ( 720m + 33401) (q+2)^3  \\
&\quad + (216m - 31287)(q+2)^2  + ( 2088m + 16136)(q+2) \\
&\quad + (432m - 3525)\\
n+2 &=  8(r + 2)^7 + (1664m - 107)(r + 2)^6 + \left(\frac{1456}{3}m + 662\right)(r + 2)^5\\
&\quad  + \left(\frac{944}{3}m - 2064\right)(r + 2)^4 + \left(\frac{11152}{3}m + 3829\right)(r + 2)^3 \\
&\quad + \left(\frac{3776}{3}m-4148\right)(r + 2)^2+ \left(\frac{3376}{3}m + 2482\right)(r + 2) \\
&\quad + \left(\frac{9296}{3}m - 607\right)\\
\end{align*}
\begin{align*}
n+2 &= (s + 2)^7 + (648m - 15)(s + 2)^6 + (1458m + 125)(s + 2)^5\\
&\quad + (1404m - 419)(s + 2)^4 + (4086m + 915)(s + 2)^3 \\
&\quad + (4212m - 1090)(s + 2)^2 + (2016m + 758)(s + 2) \\
&\quad + (1728m - 202) \\
\end{align*}
\begin{align*}
n+2 &= \left(\frac{531441}{512}m + 2\right)(t + 2)^6 + \left(\frac{460701}{512}m + 26\right)(t + 2)^5 \\
&\quad + \left(\frac{312795}{128}m - 82\right)(t + 2)^4 + \left(\frac{1532709}{256}m + 256\right)(t + 2)^3 \\
&\quad + \left(\frac{811161}{512}m - 319\right)(t + 2)^2 + \left(\frac{1590921}{512}m + 267\right)(t + 2) \\
&\quad + \left(\frac{1453005}{256}m - 53\right)
\\
n+2 &=  \left(\frac{32768}{729}\right)m(u + 2)^6 + \left(\frac{919552}{729}m + 7\right)(u + 2)^5 \\
&\quad + \left(\frac{793744}{729}m - 2\right)(u + 2)^4 + \left(\frac{1154864}{243}m + 45\right)(u + 2)^3 \\
&\quad + \left(\frac{1404608}{729}m - 36\right)(u + 2)^2 + \left(\frac{5584880}{729}m + 82\right)(u + 2) \\
&\quad + \left(\frac{4805968}{729}m + 13\right)\\
n+2 &= 8m(v + 2)^6 + \left(\frac{946}{3}m + 1\right)(v + 2)^5  + \left(\frac{33416}{3}m + 50\right)(v + 2)^4 \\
&\quad + \left(\frac{26998}{3}m + 49\right)(v + 2)^3 + \left(\frac{13664}{3}m + 4\right)(v + 2)^2 \\
&\quad + \left(\frac{31096}{3}m + 69\right)(v + 2) + \left(\frac{33824}{3}m + 44\right).
\end{align*}
\end{lemma}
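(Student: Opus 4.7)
The plan is to verify all seven identities by direct polynomial computation, treating $m$ as a formal variable. First I would expand
\[
n+2 = (1728m+7)(2592m+11)(3888m+17)(5184m+23)(6912m+31)(11664m+53)(15552m+71) + 2
\]
as a degree-$7$ polynomial in $m$ with explicit integer coefficients. Both sides of each claimed identity are polynomials in $m$, so the identity reduces to a finite check that their coefficients agree.

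For each base $B_i := p_i + 2$, which is a linear polynomial in $m$ with leading coefficient $a_i \in \{1728, 2592, 3888, 5184, 6912, 11664, 15552\}$ and constant term $b_i + 2$, the claimed representation
\[
n+2 = \sum_{j=0}^{d_i} c_{i,j}(m)\, B_i^{\,j}
\]
with each $c_{i,j}(m)$ affine in $m$ is produced by iterated Euclidean division in $\mathbb{Q}[m]$. Writing $n+2 = B_i Q_1 + c_{i,0}$, the fact that $B_i$ has $m$-degree one forces $c_{i,0}$ to be constant in $m$; then $Q_1 = B_i Q_2 + c_{i,1}$ with $c_{i,1}$ affine in $m$, and so on. The $m$-degree of the successive quotients drops by one at each stage, so the process terminates after at most seven divisions and yields affine digits throughout. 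The termination degree $d_i$ is $7$ for the four smallest bases and drops to $6$ for the three largest, exactly as recorded in the statement.

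Somewhat more efficiently for verification, one can reverse direction and expand each claimed right-hand side using the binomial theorem on $B_i = a_i m + (b_i + 2)$, collect like powers of $m$, and check agreement with the precomputed expansion of $n+2$ coefficient by coefficient. This converts each of the seven identities into a clean polynomial equality check and avoids carrying partial quotients through the division.

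The main obstacle is sheer computational bulk: seven separate reductions of a degree-$7$ polynomial against linear polynomials with four- and five-digit coefficients, where a single arithmetic slip anywhere invalidates a digit. In practice the verification is best executed in a computer algebra system such as SageMath or Mathematica. The appearance of fractional coefficients such as $\tfrac{531441}{512}m + 2$, $\tfrac{9296}{3}m - 607$, and $\tfrac{32768}{729}m$ in the stated expansions is structurally unavoidable, and it already foreshadows that these affine expressions will become non-negative integer digits strictly less than the corresponding base only when $m$ satisfies a suitable divisibility condition and a lower-bound threshold, analogous to the hypothesis $156816 \mid m$ that governed the companion Corollary \ref{cor4.2} in the $U_5'$ case.
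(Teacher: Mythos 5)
The paper states this lemma with no proof at all (only the three-factor analogue, Lemma \ref{lemma0.7}, is proved, by explicitly rewriting $n+2$ base by base), so your plan has to be measured against that model. Your fallback route --- expand each claimed right-hand side via the binomial theorem in $B_i = a_i m + (b_i+2)$, expand $n+2 = pqrstuv+2$ as a degree-$7$ polynomial in $m$, and compare the eight coefficients --- is a complete and correct proof of each of the seven identities, and it is the same direct computation that the paper carries out for Lemma \ref{lemma0.7} and silently omits here. A leading-coefficient check already confirms the plan is on track: the coefficient of $m^7$ in $n+2$ is $2^{39}3^{30}$, and every stated expansion reproduces it (e.g.\ $8\cdot 3888^{7}+1664\cdot 3888^{6}=2^{24}3^{30}\cdot 2^{15}$ in base $r+2$).

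However, your primary narrative --- that the stated digits are ``produced by iterated Euclidean division in $\mathbb{Q}[m]$'' --- is wrong, and your own sentence ``the fact that $B_i$ has $m$-degree one forces $c_{i,0}$ to be constant in $m$'' actually contradicts the statement you are proving: the stated $c_{i,0}$ are non-constant affine expressions, e.g.\ $864m-25181$ in base $p+2$. Genuine division by the linear polynomial $B_i$ leaves a \emph{constant} remainder at every stage, and that constant equals $n+2$ evaluated at the root $m=-(b_i+2)/a_i$ of $B_i$, which is generically a non-integral rational; so pure Euclidean division cannot yield these expansions at all. Requiring only $\deg c_{i,j}\le 1$ leaves a one-parameter family of choices at each step, and the authors select particular representatives with ad hoc carries (visible in their proof of Lemma \ref{lemma0.7}, where $46(p+2)-22$ is rewritten as $45(p+2)+(p-20)$) precisely so that the digits become genuine base-$(p_i+2)$ digits under the divisibility hypothesis of Corollary \ref{cor4.3}. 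Drop the ``division forces the digits'' claim and rest the proof entirely on the coefficient comparison; that part of your plan is sound and suffices.
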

\begin{corollary}\label{cor4.3}
Assume that the same hypotheses in Lemma $\ref{lemma0.9}$ with $ 373248$ $|$ $m.$ Then, we have $S_{p+2} (n+2) = 4p+5,$ $S_{q+2}(n+2) = 3q + 4,$ $S_{r+2} (n+2) = 3r + 4 ,$  $S_{s+2} (n+2) = 3s + 4$ $S_{t+2} (n+2) = 3t + 4,$ $S_{u+2} (n+2) = 2u + 3$ and $S_{v+2} (n+2) = 3v+ 4.$
\end{corollary}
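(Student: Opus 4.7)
The plan is to mimic the proofs of Corollaries \ref{cor4.1} and \ref{cor4.2}: once each displayed identity in Lemma \ref{lemma0.9} is recognized as the base-$(x+2)$ expansion of $n+2$ for the corresponding prime $x\in\{p,q,r,s,t,u,v\}$, the digit sum $S_{x+2}(n+2)$ collapses to the sum of the coefficients by uniqueness of the base expansion, and each of the seven claimed values then follows by direct addition.

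First I would justify the divisibility hypothesis $373248\mid m$. The denominators appearing among the coefficients in Lemma \ref{lemma0.9} are $3$, $128$, $243$, $256$, $512$, and $729$; their least common multiple is $2^9\cdot 3^6 = 373248$, so this is precisely the divisibility needed to turn every rational coefficient into an integer. Next, for each of the seven bases I would verify that the coefficient of $(x+2)^i$ lies in the allowed digit range $0\leq c_i<x+2$. Each coefficient has the form $A\,m+B$ with $A$ strictly less than the linear part of $x+2$ and $|B|$ much smaller, so for $m\geq 373248$ the upper bound $c_i<x+2$ is immediate; and the few coefficients with $B<0$ are easily seen to be non-negative under the same lower bound on $m$. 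This is a finite check (seven bases with at most seven coefficients each), mechanical but tedious.

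Having established that each formula in Lemma \ref{lemma0.9} is a genuine base-$(x+2)$ expansion, I would finish by reading off the digit sums. For instance, in the $p+2$-expansion the coefficients of $m$ sum to $648+918+1512+1026+432+1512+864=6912=4\cdot 1728$ and the constant pieces telescope to $33=4\cdot 7+5$, so $S_{p+2}(n+2)=6912m+33=4p+5$; the remaining six cases proceed identically. The conceptual content is entirely contained in this uniqueness-of-expansion argument, and I expect the only real obstacle to be the arithmetic bookkeeping: because the coefficients have heterogeneous denominators across the different bases, a clerical slip is easy, and the calculations should be cross-checked carefully against the polynomial expansions in Lemma \ref{lemma0.9}.
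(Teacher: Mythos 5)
Your proposal is exactly the paper's method: the paper gives no proof for this corollary, but its proof of the analogous Corollary \ref{cor4.1} is precisely your argument (verify that each coefficient is a legitimate base-$(x+2)$ digit, then add the coefficients), and your identification of $373248=2^9\cdot 3^6$ as the least common multiple of the denominators is the right justification for the divisibility hypothesis. The cross-checking you rightly insist on does turn up one point: in the printed $(q+2)$-expansion of Lemma \ref{lemma0.9} the constant parts of the coefficients sum to $26$ rather than $3\cdot 11+4=37$, and evaluating that expansion at $m=0$ shows it falls short of $n+2$ by exactly $11\cdot 13^{6}$, so the coefficient $2592m-1644$ must be read as $2592m-1633$; with that correction (and noting that this coefficient's $m$-part equals, rather than is strictly less than, that of $q+2=2592m+13$, which is still a valid digit since $-1633<13$) all seven digit sums come out as claimed.
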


\begin{theorem} The prime $k$-tuples conjecture implies that there are infinitely many Lucas-Carmichael integers of degree $4$ with exactly seven prime factors.
\end{theorem}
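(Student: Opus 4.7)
The plan is to follow the same three-move template used in this section for the three-prime-factor and five-prime-factor cases, since the substantive work has already been done in Lemma \ref{lemma0.9} and Corollary \ref{cor4.3}.

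First, I would invoke the prime $k$-tuples conjecture on the seven linear polynomials
\[
1728m+7,\ 2592m+11,\ 3888m+17,\ 5184m+23,\ 6912m+31,\ 11664m+53,\ 15552m+71,
\]
with $m$ restricted to the arithmetic progression $373248\,\mathbb{Z}$ demanded by Corollary \ref{cor4.3} (equivalently, reparametrize $m = 373248\, m'$ so that the conjecture applies directly to polynomials in $m'$). Assuming admissibility (see below), this produces infinitely many $m$ for which all seven factors $p,q,r,s,t,u,v$ of $n = U_7'(m)$ are simultaneously prime, so that $n$ is automatically square-free.

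Second, for any such $m$, the Remark following Theorem \ref{thm4.3} (applied with $l=7$ to the known Lucas-Carmichael integer $F_7'=3512071871$) gives $p_i+1 \mid n+1$ for each of the seven prime factors $p_i$, so $n$ is a Lucas-Carmichael integer. Third, I would read off the degree from Corollary \ref{cor4.3}: the quotient $(S_{p+2}(n+2)-1)/(p+1)$ equals $(4p+4)/(p+1) = 4$; the analogous quotients for $q,r,s,t,v$ equal $3$, and the one for $u$ equals $2$. The maximum is $4$, so each such $n$ has degree exactly $4$, and one gets infinitely many of them.

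The one step that actually requires thought is the admissibility check embedded in the application of the $k$-tuples conjecture, namely showing that no prime divides $\prod_{i=1}^7 (a_i m + b_i)$ for every $m$ in the prescribed progression. Since the coefficients $a_i = r_i R$ and the constants $b_i = p_i$ were engineered via the Lucas-Carmichael integer $F_7'$ specifically so that each $p_i$ is coprime to $r_i R$, I expect this to reduce to verifying that for each prime $\ell \leq 71$ there exists a residue class of $m'$ making all seven factors nonzero modulo $\ell$, which is a short finite check in the same spirit as the admissibility arguments used in \cite{Samuel} for the primary Carmichael analogue.
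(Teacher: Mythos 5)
Your proposal is correct and follows essentially the same route as the paper: apply the prime $k$-tuples conjecture to the seven linear forms with $373248 \mid m$, then use Corollary \ref{cor4.3} (whose digit sums give quotients $4,3,3,3,3,2,3$, hence degree $4$) to conclude. The only difference is that you establish the Lucas-Carmichael property via the congruence in the Remark after Theorem \ref{thm4.3} rather than via Proposition \ref{prop3.1} applied to the digit sums, and you make explicit the admissibility check that the paper leaves implicit; both are harmless refinements of the same argument.
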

\begin{proof} According to the prime $k$-tuples conjecture, there are infinitely many positive integers $m$ divisible by $ 373248$ for which $U_7'(m)$ has exactly seven prime factors. By Corollary $\ref{cor4.3},$ each of these numbers $U_7'(m)$ is a Lucas-Carmichael integer of degree $4.$
\end{proof}

\medskip
\noindent
\textbf{Acknowledgements.} The second author would like to acknowledge support from MeitY QCAL. Also, the second author would like to acknowledge support from ICTP through the associate programme.


\end{document}